\documentclass{ifacconf}

\usepackage{graphicx}      
\usepackage{natbib}        
\usepackage{tikz}
\usepackage{amsmath}
\usepackage{amssymb}
\usepackage{mathtools}
\usepackage{xcolor}
\usepackage{enumerate}
\usepackage{subcaption}
\usepackage{dblfloatfix}
\usepackage{afterpage}

\definecolor{darkblue}{RGB}{0,0,255}
\definecolor{lightblue}{RGB}{77,187,230}
\definecolor{darkgreen}{RGB}{46,150,64}
\definecolor{lightgreen}{RGB}{136,231,136}

\DeclareMathOperator{\Rank}{rank}

\usepackage[customcolors]{hf-tikz}
\newcommand{\usegray}{
\hfsetbordercolor{gray!0}
\hfsetfillcolor{gray!15}
}
\newcommand{\usegraydark}{
\hfsetbordercolor{gray!0}
\hfsetfillcolor{gray!45}
}

\newcommand\scalemath[2]{\scalebox{#1}{\mbox{\ensuremath{\displaystyle #2}}}}
\begin{document}
\begin{frontmatter}

\title{Testing Backward-Flatness of Nonlinear Discrete-Time Systems} 

\thanks[footnoteinfo]{This research was funded in whole, or in part, by the Austrian Science Fund (FWF) P36473. For the purpose of open access, the author has applied a CC BY public copyright license to any Author Accepted Manuscript version arising from this submission.}

\author[First]{Johannes Schrotshamer} 
\author[First]{Bernd Kolar} 
\author[First]{Markus Sch{\"o}berl}

\address[First]{Institute of Control Systems, Johannes Kepler University Linz, Altenberger Strasse 69, 4040 Linz, Austria (e-mail: johannes.schrotshamer@jku.at, bernd.kolar@jku.at, markus.schoeberl@jku.at).}

\begin{abstract}                
Despite ongoing research, testing the flatness of discrete-time systems remains a challenging problem. To date, only the property of forward-flatness --- a special case of difference-flatness --- can be checked in a computationally efficient manner. In this paper, we propose a systematic approach for testing backward-flatness, which is another special case of difference-flatness, and for deriving a corresponding backward-flat output. Additionally, we discuss the relationship between the Jacobian matrices associated with the flat parameterization of backward- and forward-flat systems and illustrate our results by an academic example.
\end{abstract}

\begin{keyword}
Difference flatness, discrete-time systems, nonlinear control systems
\end{keyword}

\end{frontmatter}

\section{Introduction}
The concept of differential flatness, introduced by Fliess, L{\'e}vine, Martin and Rouchon in the 1990s (see e.g. \cite{FliessLevineMartinRouchon:1995,FliessLevineMartinRouchon:1999}), has attracted a lot of interest within the control systems community. In simple terms, a continuous-time nonlinear system is flat if all system variables can be expressed by a flat output and its time derivatives, which itself depends on the system variables and their time derivatives. This implies the existence of a one-to-one correspondence of solutions of a flat system and the solutions of a trivial system. The popularity stems from the fact that flatness allows an elegant systematic solution of feedforward and feedback problems, making it highly relevant for practical applications. However, currently there do not exist verifiable necessary and sufficient conditions for differential flatness, except for certain classes of systems. For recent developments, see e.g. \cite{SchoeberlRiegerSchlacher:2010}, \cite{Schoeberl:2014}, \cite{NicolauRespondek:2017}, \cite{GstoettnerKolarSchoeberl:2022b,GstottnerKolarSchoberl:2021b}, or \cite{NicolauRespondekLi:2025}.\\
The discrete-time analogue of differential flatness --- known as difference flatness --- is commonly defined by simply replacing the time derivatives of the continuous-time definition by forward-shifts (see e.g. \cite{Sira-RamirezAgrawal:2004}, \cite{KaldmaeKotta:2013}, or \cite{KolarKaldmaeSchoberlKottaSchlacher:2016}).
For these so-called forward-flat systems, a geometric test in the form of necessary and sufficient conditions has been derived in \cite{KolarDiwoldSchoberl:2023}. Interestingly, no such counterpart exists for continuous-time systems.
Since this definition does not encompass all known discrete-time flat systems, research has been directed toward developing a more general notion of difference flatness.
Recently, in \cite{GuillotMillerioux2020}, a definition in which the flat output may additionally depend on backward-shifts of the input variables has been proposed. An even more general definition has been introduced in \cite{DiwoldKolarSchoberl:2020}, where the authors focused on the one-to-one correspondence between the solutions of a flat system and those of a trivial system. Within this approach, the flat output may depend not only on the system variables and their forward-shifts as well as backward-shifts of the input, but also on backward-shifts of the state.
For this class of discrete-time systems, necessary and sufficient conditions have been derived in \cite{Kaldmae:2021} using an algebraic approach. However, the presence of degrees of freedom and the need to solve partial differential equations lead to high computational complexity, motivating the development of alternative methods.\\
Based on this more general notion of difference flatness, it has been shown in \cite{SchrotshamerKolarSchoeberl:2025} that analogously to the special case of forward-flatness another subclass --- referred to as backward-flat systems --- emerges naturally when neither the flat output nor the flat parameterization involves forward-shifts.

While a computationally efficient test currently exists only for the property of forward-flatness, in the present contribution we address the existing gap for the recently introduced subclass of backward-flat systems by presenting a systematic approach to check discrete-time systems for backward-flatness. We prove that a discrete-time system is backward-flat if and only if an associated system is forward-flat. This associated system is constructed such that the trajectories of the system variables are in one-to-one correspondence with those of the original system. Consequently, the backward-flatness of a discrete-time system can be tested indirectly by applying the existing geometric test for forward-flatness to its associated system.\\
Additionally, if the associated system is forward-flat, a corresponding flat output can be derived as in \cite{KolarDiwoldSchoberl:2023}. Based on the forward-flat output of the associated system, a backward-flat output of the original system can then be obtained systematically.
Finally, we discuss the relationship between the Jacobian matrices of the flat parameterization of associated systems, based on rank conditions for specific submatrices.

The paper is organized as follows: In Section \ref{sec:two}, we first recall the concept of flatness for discrete-time systems, including the special cases of forward- and backward-flatness, as discussed in \cite{SchrotshamerKolarSchoeberl:2025}. Next, in Section \ref{sec:three}, we present associated systems whose trajectories are in a one-to-one correspondence with those of the original system and analyze their flatness. Based on the result that a discrete-time system is backward-flat if and only if its associated system is forward-flat, we give a systematic approach to test a discrete-time system for backward-flatness and for deriving a corresponding backward-flat output in Section \ref{sec:four}. Finally, we examine the relationship between the Jacobian matrices of the flat parameterization of associated systems and illustrate our results by an academic example.
\paragraph*{Notation}
    To enhance readability, we commonly denote the one-fold forward-shift of a variable by a superscript +, whereas backward- and higher-order forward-shifts are generally represented using subscripts in brackets, e.g. $u_{[\alpha]}$ denotes the $\alpha$th forward-shift and $u_{[-\alpha]}$ denotes the $\alpha$th backward-shift of $u$ with $\alpha\geq0$.
    If a variable $y$ consists of several components, i.e. $y^j=(y^1,\ldots,y^m)$ with $j=1,\ldots,m$, shifts are denoted with a multi-index. Let $S_1=(s_{1,1},\ldots,s_{1,m})$ and $S_2=(s_{2,1},\ldots,s_{2,m})$ be two multi-indices, then $y_{[-S_1]}$ denotes the backward-shifts and $y_{[S_2]}$ denotes the forward-shifts of the individual components of $y$. Moreover, $y_{[-S_1,S_2]}$ is an abbreviation for $y$ and its backward-shifts up to order $S_1$ and its forward-shifts up to order $S_2$.

\section{Discrete-Time Systems and Flatness} \label{sec:two}

    \subsection{Flatness of Discrete-Time Systems}

    In this contribution, we consider nonlinear time-invariant discrete-time systems of the form
	\begin{equation}\label{eq:sysEq}
		x^{i,+}=f^{i}(x,u)\,,\quad i=1,\dots,n
	\end{equation}
	with $\dim(x)=n$, $\dim(u)=m$ and smooth functions $f^{i}(x,u)$ that meet $\Rank (\partial_u f)=m$, i.e. independent inputs. In addition, we assume
    that the system equations \eqref{eq:sysEq} satisfy the submersivity condition 
    \begin{equation}\label{eq:submersivity_condition}
        \Rank(\partial_{(x,u)}f)=n\,.
    \end{equation}
    Submersivity is a common assumption in the discrete-time literature, as it is necessary for accessibility (see e.g. \cite{Grizzle:1993}).
    Furthermore, due to \eqref{eq:submersivity_condition}, there always exist $m$ functions $g(x, u)$ such that the map
    \begin{equation}\label{eq:sysEq_ext}
		x^{+} = f(x, u)\,,\quad \zeta = g(x, u)
	\end{equation}
	is locally a diffeomorphism with its inverse given by
    \begin{equation}\label{eq:sysEq_ext_inv}
		x=\psi_x(x^+,\zeta)\,, \quad u=\psi_u(x^+,\zeta)\,.
	\end{equation}
    Like in \cite{DiwoldKolarSchoberl:2020}, we refer to a discrete-time system \eqref{eq:sysEq} as being flat if there exists a one-to-one correspondence between its solutions $(x(k),u(k))$ and solutions $y(k)$ of a trivial system (arbitrary trajectories that need not satisfy any difference equation) with the same number of inputs.
    As illustrated in Fig.~\ref{fig:One_to_one_correspondence}, this correspondence means that the values of $x(k)$ and $u(k)$ at a given time step may depend on an arbitrary but finite number of past and future values of $y(k)$, and conversely, $y(k)$ may depend on an arbitrary but finite number of past and future values of $x(k)$ and $u(k)$, as formally stated in Definition 1 of \cite{DiwoldKolarSchoberl:2020}.

    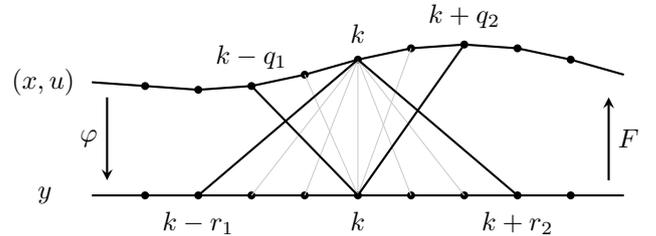
\begin{figure}[h!]
      \centering
      \begin{tikzpicture}[scale=1]
        
        \draw[thick] (-3.5,1.5) -- (-2.8,1.45) -- (-2.1,1.4) -- (-1.4,1.45) -- (-0.7,1.6) -- (0,1.8) -- (0.7,1.95) -- (1.4,2) -- (2.1,1.95) -- (2.8,1.8) -- (3.5,1.6);
        
        \draw[thick] (-3.5,0) -- (3.5,0);
        
        \fill (-2.8,1.45) circle (1.5pt);
        \fill (-2.1,1.4) circle (1.5pt);
        \fill (-1.4,1.45) circle (1.5pt);
        \fill (-0.7,1.6) circle (1.5pt);
        \fill (0,1.8) circle (1.5pt);
        \fill (0.7,1.95) circle (1.5pt);
        \fill (1.4,2) circle (1.5pt);
        \fill (2.1,1.95) circle (1.5pt);
        \fill (2.8,1.8) circle (1.5pt);
        
        \foreach \x in {-2.8,-2.1,-1.4,-0.7,0,0.7,1.4,2.1,2.8} {
            \fill (\x,0) circle (1.5pt);
        }
        
        \draw[gray!40] (0,0) -- (-0.7,1.6);
        \draw[gray!40] (0,0) -- (0,1.8);
        \draw[gray!40] (0,0) -- (0.7,1.95);
        \draw[gray!40] (-1.4,0) -- (0,1.8);
        \draw[gray!40] (-0.7,0) -- (0,1.8);
        \draw[gray!40] (0.7,0) -- (0,1.8);
        \draw[gray!40] (1.4,0) -- (0,1.8);
        
        \draw[thick] (0,0) -- (-1.4,1.45);
        \draw[thick] (0,0) -- (1.4,2);
        \draw[thick] (-2.1,0) -- (0,1.8);
        \draw[thick] (2.1,0) -- (0,1.8);
        
        \node[above] at (-1.4,1.55) {$k-q_1$};
        \node[above] at (0,1.9) {$k$};
        \node[above] at (1.4,2.1) {$k+q_2$};
        
        \node[below] at (-2.1,-0.1) {$k-r_1$};
        \node[below] at (0,-0.1) {$k$};
        \node[below] at (2.1,-0.1) {$k+r_2$};
        
        \node[left] at (-3.6,1.5) {$(x,u)$};
        \node[left] at (-3.9,0) {$y$};
        
        \draw[-stealth,thick] (-3.3,1.3) -- (-3.3,0.2);
        \node[left] at (-3.3,0.75) {$\varphi$};
        
        \draw[-stealth,thick] (3.3,0.2) -- (3.3,1.3);
        \node[right] at (3.3,0.75) {$F$};
        
        \end{tikzpicture}
        \caption{One-to-one correspondence of the trajectories}
        \label{fig:One_to_one_correspondence}
    \end{figure}

    In the following, we recall a slightly modified yet equivalent formulation as stated in \cite{SchrotshamerKolarSchoeberl:2025}, along with the two special cases of forward- and backward-flatness, which naturally arise when the flat output and the flat parameterization are restricted to forward- or backward-shifts, respectively.
    \vspace{0.05cm}

	\begin{defn}
		\label{def:flatness}The system \eqref{eq:sysEq} is said to be flat around an equilibrium $(x_{0},u_{0})$, if the $n+m$ coordinate functions $x$ and $u$ can be expressed locally by an $m$-tuple of functions
		\begin{equation}
			y^{j}=\varphi^{j}(\zeta_{[-Q_1,-1]},x,u,u_{[1,Q_2]})\,,\quad\,j=1,\ldots,m\label{eq:flat_output}
		\end{equation}
		called the flat output, and its backward- and forward-shifts up to some finite order, i.e.
    	\begin{equation}\label{eq:flat_param}
    		\begin{aligned}
    			x&=F_{x}(y_{[-R_1,R_2-1]})\,,\\
    			\:u&=F_{u}(y_{[-R_1,R_2]})\,.
    		\end{aligned}
    	\end{equation}
        \vspace{-0.2cm}
	\end{defn}
    For a given flat output \eqref{eq:flat_output}, the parameterization \eqref{eq:flat_param} of the state- and input variables $x$ and $u$ is unique. 
    The special case of forward-flatness follows directly from Definition \ref{def:flatness} when no backward-shifts occur in the flat output \eqref{eq:flat_output} and $R_1=(0,\ldots,0)$ applies for the parameterization \eqref{eq:flat_param}.
    \vspace{0.05cm}
	\begin{defn}
		\label{def:forward-flatness}The system \eqref{eq:sysEq} is said
		to be forward-flat around an equilibrium $(x_{0},u_{0})$, if the
		$n+m$ coordinate functions $x$ and $u$ can be expressed locally
		by an $m$-tuple of functions
		\begin{equation}
			y^{j}=\varphi^{j}(x,u,u_{[1,Q_2]})\,,\quad j=1,\ldots,m\label{eq:forward-flat_output}
		\end{equation}
		called the forward-flat output, and its forward-shifts up to some finite order, i.e.
    	\begin{equation*}
    		\begin{aligned}
    			x&=F_{x}(y_{[0,R_2-1]})\,,\\
    			\:u&=F_{u}(y_{[0,R_2]})\,.
    		\end{aligned}
    	\end{equation*}
        \vspace{-0.2cm}
	\end{defn}

    Conversely, if no forward-shifts appear in the flat output \eqref{eq:flat_output} and $R_2=(0,\ldots,0)$ applies for the parameterization \eqref{eq:flat_param}, then Definition \ref{def:flatness} leads to the subclass of backward-flat systems.
    \vspace{0.05cm}
	\begin{defn}
		\label{def:backward-flatness}The system \eqref{eq:sysEq} is said
		to be backward-flat around an equilibrium $(x_{0},u_{0})$, if the
		$n+m$ coordinate functions $x$ and $u$ can be expressed locally
		by an $m$-tuple of functions
		\begin{equation}
			y^{j}=\varphi^{j}(\zeta_{[-Q_1,-1]},x,u)\,,\quad j=1,\ldots,m\label{eq:backward-flat_output}
		\end{equation}
		called the backward-flat output, and its backward-shifts up to some finite order, i.e.
        \begin{equation*}
    		\begin{aligned}
    			x&=F_{x}(y_{[-R_1,-1]})\,,\\
    			\:u&=F_{u}(y_{[-R_1,0]})\,.
    		\end{aligned}
	    \end{equation*}
        \vspace{-0.2cm}
	\end{defn}

    \subsection{Geometric Test for Forward-Flatness}
    Since our algorithmic approach for checking the backward-flatness of discrete-time systems builds on the geometric test for forward-flatness of \cite{KolarDiwoldSchoberl:2023}, we first recall some basic differential geometric concepts that are employed therein. For a general introduction to differential geometry, we refer, e.g., to \cite{Boothby:1986} or \cite{Lee:2012}.
    
    Let $\mathcal{M}$ be an $n$-dimensional manifold with local coordinates $x^{1},\ldots,x^{n}$. A vector field $v$ on $\mathcal{M}$ has the form $v=v^{i}(x)\partial_{x^{i}}$, where $\partial_{x^{i}}$, $i=1,\ldots,n$ denotes the basis vector fields, and $v^{i}(x)$, $i=1,\ldots,n$ are smooth functions. A $d$-dimensional distribution on $\mathcal{M}$ is denoted by $D=\mathrm{span}\{v_{1},\ldots,v_{d}\}$, where $v_{1},\ldots,v_{d}$ are linearly independent vector fields.\\
    Geometrically, the map $f$ of \eqref{eq:sysEq} can be interpreted as a map from a manifold $\mathcal{X}\times\mathcal{U}$ with coordinates $(x,u)$ to a manifold $\mathcal{X}^{+}$ with coordinates $x^{+}$, i.e. 
    \begin{equation}
    	f:\mathcal{X}\times\mathcal{U}\rightarrow\mathcal{X}^{+}\,.\label{eq:map_f}
    \end{equation}
    Additionally, the map $\pi:\mathcal{X}\times\mathcal{U}\rightarrow\mathcal{X}^{+}$ is defined by
    \begin{equation*}
    	x^{i,+}=x^{i},\hphantom{aa}i=1,\dots,n\,.\label{eq:piEq}
    \end{equation*}
    A vector field $v$ is called ''projectable'' w.r.t. the map \eqref{eq:map_f}, if a pointwise application of the tangent map $f_*:\mathcal{T}(\mathcal{X}\times\mathcal{U})\rightarrow\mathcal{T}(\mathcal{X}^+)$ induces a well-defined vector field $w=f_*(v)$ on $\mathcal{X}^+$, with $f_*(v)$ being called the pushforward of the vector field $v$. Moreover, we call a distribution $D$ on $\mathcal{X}\times\mathcal{U}$ projectable if it admits a basis consisting of projectable vector fields only. \\
    With these geometric preliminaries we can give a modified but equivalent formulation of Algorithm 1 in \cite{KolarDiwoldSchoberl:2023}.
    
    \begin{alg}\label{alg:definition_sequence}Start
    	with the distribution $E_{0}=\mathrm{span}\{\partial_{u^1},$ $\ldots,\partial_{u^m}\}$ on $\mathcal{X}\times\mathcal{U}$
    	and repeat the following steps for $k\geq1$:\textbf{\vspace{-0.2cm}
    	}
    	\begin{enumerate}
    		\item[\emph{1.}] Determine the largest projectable subdistribution $D_{k-1}$ that
    		meets $D_{k-1}\subseteq E_{k-1}$.
    		\item[\emph{2.}] Compute the pushforward $\Delta_{k}=f_{*}(D_{k-1})$.
    		\item[\emph{3.}] Proceed with $E_{k}=\pi_{*}^{-1}(\Delta_{k})$.\textbf{\vspace{-0.1cm}
    			\vspace{-0.1cm}
    		}
    	\end{enumerate}
    	Stop if $\dim(E_{\bar{k}})=\dim(E_{\bar{k}-1})$ for some step $\bar{k}$.
    \end{alg}


    The sequence of distributions defined by Algorithm \ref{alg:definition_sequence} allows to check the forward-flatness of a system \eqref{eq:sysEq} in a straightforward way as shown in \cite{KolarDiwoldSchoberl:2023}.
    
    \begin{thm}
    	\label{thm:condition}The system \eqref{eq:sysEq} is forward-flat if and only if $\dim(E_{\bar{k}-1})=n+m$.
    \end{thm}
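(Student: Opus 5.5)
Theorem \ref{thm:condition} is the main result of \cite{KolarDiwoldSchoberl:2023}, recalled here for the modified Algorithm \ref{alg:definition_sequence}. The plan is therefore to show that the modified sequence coincides with the original one, and then to reproduce the two directions of the original argument. First, I would check that the distributions $E_k$ generated here agree with those of \cite{KolarDiwoldSchoberl:2023}. Since $E_0=\mathrm{span}\{\partial_u\}$ and each $E_k=\pi_*^{-1}(\Delta_k)$ contains $\mathrm{span}\{\partial_u\}$, induction gives $E_{k-1}\subseteq E_k$. The key structural fact is that each $D_{k-1}$ is involutive, so $\Delta_k=f_*(D_{k-1})$ is a well-defined involutive distribution on $\mathcal{X}^+$. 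I would verify involutivity by noting that projectable fields push forward consistently and that the largest projectable subdistribution of an involutive distribution is again involutive, arguing by induction from the involutive $E_0$. Because the dimensions of the $E_k$ are nondecreasing and bounded by $n+m$, the sequence stabilizes at some $\bar{k}$.

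For sufficiency, assume $\dim(E_{\bar{k}-1})=n+m$. By the Frobenius theorem, I would straighten each $\Delta_k$ in coordinates on $\mathcal{X}^+$. This yields a chain of coordinate transformations in which the state splits as $x=(x_{\bar{k}},\ldots,x_1)$, with $\Delta_k$ spanned by $\partial_{x_k},\ldots,\partial_{x_1}$. The projectability of $D_{k-1}$ means the system decomposes into a triangular form: the variables annihilated by $\Delta_k$ evolve independently of the variables inside it. This is the discrete-time analogue of a structurally flat decomposition. I would then peel off subsystems iteratively. At each step, the variables $x_k$ inside the distribution act as inputs for the remaining subsystem, which is the quotient system obtained by factoring out $\Delta_k$. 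The condition $\dim(E_{\bar{k}-1})=n+m$ guarantees that at the last step the remaining subsystem is empty or trivial. The flat output then consists of the ``top'' coordinates that remain unconstrained. Backtracking through the triangular form expresses $x$ and $u$ through forward-shifts of $y$ only, and the output itself depends only on $x$, $u$ and forward-shifts of $u$, as Definition \ref{def:forward-flatness} requires.

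For necessity, assume a forward-flat output $y=\varphi(x,u,u_{[1,Q_2]})$ exists. Using the parameterization $x=F_x(y_{[0,R_2-1]})$, I would show by induction on $k$ that the distributions $E_k$ contain the directions along which the highest remaining shifts of $y$ vary with all lower ones fixed. Consequently, $E_k$ is at least as large as the span generated by a decreasing filtration of $x$ according to shift order. Since $R_2$ is finite, after at most $\max R_2$ steps this filtration covers all of $\mathcal{X}\times\mathcal{U}$, which forces $\dim(E_{\bar{k}-1})=n+m$.

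The main obstacle is necessity, specifically proving that $D_{k-1}$ is large enough at each step, i.e. that the largest projectable subdistribution does not lose the directions coming from the flat parameterization. Projectability is not inherited under intersections in an obvious way. I would first try to exhibit explicit projectable fields, namely the coordinate fields $\partial_{y_{[j]}}$ transported through the extended diffeomorphism \eqref{eq:sysEq_ext}, and check that they lie in $E_{k-1}$ and push forward to fields on $\mathcal{X}^+$.
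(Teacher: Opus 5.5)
The paper does not prove this theorem; it quotes it from \cite{KolarDiwoldSchoberl:2023}, so your proposal has to stand on its own. Your sufficiency direction has the right shape: straighten out the nested involutive distributions and read a flat output off the triangular form, as in Example~\ref{ex:academic}. Two statements in it are inaccurate, though. First, for $k\geq 2$ the inclusion $E_{k-1}\subseteq E_k$ does not follow from $\partial_u\in E_k$. You need $D_{k-2}\subseteq D_{k-1}$, which follows from $D_{k-2}\subseteq E_{k-2}\subseteq E_{k-1}$ only once you know that a unique largest projectable subdistribution exists. Second, the coordinates of $x^+$ annihilated by $\Delta_k$, composed with $f$, are only invariant along $D_{k-1}$. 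They need not be independent of the coordinates spanning $\Delta_k$: in the example, $\bar z^{1,+}_2$ depends on $\bar z^1_1$ and on $v^1$, while only $\partial_{v^2}$ lies in $D_0$.

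The genuine gap is necessity, exactly at the step you flag but leave open. Since $E_k=\pi_*^{-1}(f_*(D_{k-1}))$ retains only the projectable part of $E_{k-1}$, your inductive claim that $E_k$ contains ``the directions of the highest remaining shifts of $y$'' needs three things. These directions must define a distribution on $\mathcal{X}\times\mathcal{U}$, it must lie in $E_{k-1}$, and it must be projectable with respect to $f$. Already for $k=1$ this amounts to the nontrivial statement $D_0\neq 0$ whenever $n\geq 1$; otherwise $E_1=E_0$ and the algorithm stops at dimension $m$. Nothing in your sketch establishes this.

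Your suggested remedy does not work as stated. The fields $\partial_{y_{[j]}}$ live on the jet space of $y$, and the map \eqref{eq:sysEq_ext} does not involve $y$ at all. Once transported through the correspondence $(x,u,u_{[1]},\ldots)\leftrightarrow(y,y_{[1]},\ldots)$, they acquire components along, and coefficients depending on, $u_{[1]},u_{[2]},\ldots$. For example, for the forward-flat system \eqref{eq:sysEq_ex1_mirrored} with $\hat y=(z^4,z^3+z^2z^4)$ one finds $v^2=(\hat y^2_{[1]}-\hat y^2_{[2]})/(\hat y^1_{[1]}+\hat y^1_{[3]})$. So $\partial_{\hat y^2_{[2]}}$ has the $v^2$-component $-1/(z^1+v^1_{[1]})$, which is not a function of $(z,v)$. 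At best the spans of such fields descend. Proving that they do, that they stay inside $E_{k-1}$, and that they are projectable at every stage is the entire content of necessity.

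What is missing is a decomposition argument, namely the repeated splitting into a subsystem and an endogenous dynamic feedback that underlies the cited test. You would show two things:
\begin{enumerate}[(a)]
\item A forward-flat system with $n\geq 1$ admits a nontrivial projectable $D\subseteq\mathrm{span}\{\partial_u\}$. Equivalently, after state and input transformations it reads $x_2^+=f_2(x,u_1)$, $x_1^+=f_1(x,u_1,u_2)$ with $\partial_{u_2}f_1$ invertible.
\item The subsystem $x_2^+=f_2(x_2,x_1,u_1)$ with inputs $(x_1,u_1)$ is again forward-flat, in particular for the largest choice $D=D_0$ that the algorithm uses.
\end{enumerate}
Up to the adjoined directions $\partial_{u_2}$, the sequence $E_k$ of the original system is the subsystem's sequence shifted by one step. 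An induction on the state dimension then yields $\dim(E_{\bar k-1})=n+m$.
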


    Since the sequence of distributions generated by Algorithm \ref{alg:definition_sequence} generalizes the sequence of distributions used in the static feedback linearization test in \cite{Grizzle:1986}, the latter is included as a special case.

    \begin{rem}
        It should be noted that in \cite{KolarSchrotshamerSchoberl:2025} a dual geometric test for forward-flatness based on a unique sequence of integrable codistributions has been introduced.
        Consequently, in the remainder of this paper, the forward-flatness of discrete-time systems could equivalently be tested using the codistribution-based approach proposed in \cite{KolarSchrotshamerSchoberl:2025}.
    \end{rem}

\section{Associated Flat Systems} \label{sec:three}
    Given a discrete-time system of the form \eqref{eq:sysEq_ext}, in this section, we first derive an associated system whose trajectories are in a one-to-one correspondence with those of the original system \eqref{eq:sysEq_ext}. We then show that the associated system is flat if and only if the original system is flat and discuss this relation for the special cases of forward- and backward-flatness.

    \begin{thm} \label{thm:time_mirrored_system}
        The trajectories of the discrete-time system
        \begin{equation} \label{eq:flat_sys_mirrored}
            \begin{aligned}
                z^+&=\psi_x(z,v)\\
                \eta&=\psi_u(z,v)\
            \end{aligned}
        \end{equation}
    with the functions $\psi_x$ and $\psi_u$ from \eqref{eq:sysEq_ext_inv} are related to the trajectories of the discrete-time system \eqref{eq:sysEq_ext} for any fixed time step $k$ via the one-to-one correspondence
        \begin{equation}\label{eq:correspondence}
            \begin{aligned}
                x(k+l)&=z(k-l+1)\\
                u(k+l)&=\eta(k-l)\\
                \zeta(k+l)&=v(k-l)
            \end{aligned}
        \end{equation}
        with $l\in\mathbb{Z}$.
    \end{thm}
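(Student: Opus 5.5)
The plan is a direct verification in both directions, using only that the map \eqref{eq:sysEq_ext} is a local diffeomorphism with inverse \eqref{eq:sysEq_ext_inv}.

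\emph{From the original to the associated system.} Fix $k$ and let $(x(\cdot),u(\cdot),\zeta(\cdot))$ be a trajectory of \eqref{eq:sysEq_ext}. That is, $x(j+1)=f(x(j),u(j))$ and $\zeta(j)=g(x(j),u(j))$ for all $j$. Define $z,\eta,v$ by \eqref{eq:correspondence}; equivalently, with $j=k-l$,
\[
z(j)=x(2k-j+1),\quad \eta(j)=u(2k-j),\quad v(j)=\zeta(2k-j).
\]
This is well defined because $l\mapsto k-l+1$ and $l\mapsto k-l$ are bijections of $\mathbb{Z}$. Take the time step $2k-j$ of the original system. It gives
\[
\bigl(x(2k-j+1),\zeta(2k-j)\bigr)=\bigl(f,g\bigr)\bigl(x(2k-j),u(2k-j)\bigr).
\]
Applying the inverse \eqref{eq:sysEq_ext_inv} yields
\[
x(2k-j)=\psi_x(x(2k-j+1),\zeta(2k-j)),\quad u(2k-j)=\psi_u(x(2k-j+1),\zeta(2k-j)).
\]
Rewritten in the new variables, this reads $z(j+1)=\psi_x(z(j),v(j))$ and $\eta(j)=\psi_u(z(j),v(j))$. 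These are exactly the equations \eqref{eq:flat_sys_mirrored}.

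\emph{From the associated to the original system.} Start from a trajectory $(z,\eta,v)$ of \eqref{eq:flat_sys_mirrored} and define $x,u,\zeta$ by \eqref{eq:correspondence}. Each step of \eqref{eq:flat_sys_mirrored} states that $(x(2k-j),u(2k-j))=\psi(x(2k-j+1),\zeta(2k-j))$. Applying the forward map $(f,g)$, which is the inverse of $\psi=(\psi_x,\psi_u)$, recovers $x^+=f(x,u)$ and $\zeta=g(x,u)$ at time $2k-j$. As $j$ ranges over $\mathbb{Z}$, so does $2k-j$, so \eqref{eq:sysEq_ext} holds at every time step.

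\emph{Bijectivity.} The two constructions are mutually inverse reindexings, so the correspondence is one-to-one.

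The only delicate point is the locality of the diffeomorphism. The inverse relation holds only on a neighborhood of the equilibrium, so trajectories must be understood as staying in the domain where \eqref{eq:sysEq_ext} and \eqref{eq:sysEq_ext_inv} are mutually inverse. I would state this restriction explicitly. Beyond that, the main care is simply tracking the index shift of $+1$ for $x$ versus $0$ for $u,\zeta$, which is what makes the state index in \eqref{eq:correspondence} read $k-l+1$.
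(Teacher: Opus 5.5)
Your proof is correct and follows the paper's own argument: substitute a trajectory of the original system into the inverse map $(\psi_x,\psi_u)$ and reindex via the correspondence, and your substitution $j=k-l$ reproduces exactly the paper's relations. The paper writes out only this direction. Your explicit converse (applying $(f,g)$), the check that the reindexing is a bijection, and the remark that trajectories must stay in the domain of the local diffeomorphism are useful additions, not a different route.
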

        
    \begin{pf}
        By substituting a trajectory of the system \eqref{eq:sysEq_ext} into the inverse map \eqref{eq:sysEq_ext_inv}, we obtain for a fixed time step $k$ the relations
        \begin{equation*}
            \begin{aligned}
                x(k+l) &= \psi_x(x(k+(l+1)),\zeta(k+l)) \\
                u(k+l) &= \psi_u(x(k+(l+1)),\zeta(k+l))
            \end{aligned}
        \end{equation*}
        with $l\in\mathbb{Z}$.
        Applying the one-to-one correspondence \eqref{eq:correspondence} yields
        \begin{equation} \label{eq:sysEq_assoz_seq}
            \begin{aligned}
                z(k-l+1) &= \psi_x(z(k-(l+1)+1),v(k-l)) \\
                \eta(k-l) &= \psi_u(z(k-(l+1)+1),v(k-l)) \,,
            \end{aligned}
        \end{equation}
        which shows that the system equations of the associated system \eqref{eq:flat_sys_mirrored} are satisfied, thereby completing the proof.
    \end{pf}
    \begin{rem} \label{rem:assoc_of_assoc}
        Due to the fact that the trajectories of a system \eqref{eq:sysEq_ext} and its associated system \eqref{eq:flat_sys_mirrored} are related by the one-to-one correspondence \eqref{eq:correspondence}, it follows directly that the associated system of \eqref{eq:flat_sys_mirrored} coincides with the original system \eqref{eq:sysEq_ext}.
    \end{rem}

    It should be noted that Theorem \ref{thm:time_mirrored_system} applies to all discrete-time nonlinear systems of the form \eqref{eq:sysEq_ext} regardless of their flatness.
    The following theorem addresses the relation between the flatness properties of the systems \eqref{eq:sysEq_ext} and \eqref{eq:flat_sys_mirrored}.
    
    \begin{thm} \label{thm:mirrored}
        The associated system \eqref{eq:flat_sys_mirrored} of Theorem \ref{thm:time_mirrored_system} is flat according to Definition \ref{def:flatness} if and only if the original system \eqref{eq:sysEq_ext} is also flat.
        Given a flat output \eqref{eq:flat_output} of the system \eqref{eq:sysEq_ext}, the associated system \eqref{eq:flat_sys_mirrored} possesses the flat output
        \begin{equation} \label{eq:flat_output_mirrored}
        \begin{aligned}
            \hat{y}&=\varphi(v_{[Q_1,1]},\psi_x(z,v),\psi_u(z,v),\eta_{[-1,-Q_2]}) \\
            &=\hat \varphi(\eta_{[-Q_2,-1]},z,v,v_{[1,Q_1]}) \,,
        \end{aligned}
        \end{equation}
        whose trajectories are related to those of \eqref{eq:flat_output} for any fixed time step $k$ via the one-to-one correspondence\footnote{Equation \eqref{eq:y_mirror} does of course not impose any constraints on the choice of the trajectory of $\hat y$.}
        \begin{equation} \label{eq:y_mirror}
            \hat y(k+l) = y(k-l) \,.
        \end{equation}
        The parameterization of the system variables of \eqref{eq:flat_sys_mirrored} by the flat output \eqref{eq:flat_output_mirrored} follows as
        \begin{equation} \label{eq:flat_param_mirrored}
            \begin{aligned}
                z&=F_x(\hat{y}_{[R_1-1,-R_2]})=F_z(\hat{y}_{[-R_2,R_1-1]}) \\
                v&=g(F_x(\hat{y}_{[R_1,-R_2+1]}),F_u(\hat{y}_{[R_1,-R_2]}))=F_v(\hat{y}_{[-R_2,R_1]}) \,,
            \end{aligned}
        \end{equation}
        with the maps $F_x$ and $F_u$ from \eqref{eq:flat_param}.
    \end{thm}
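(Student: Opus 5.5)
The proof transports the flat output and its parameterization through the trajectory correspondence \eqref{eq:correspondence} of Theorem \ref{thm:time_mirrored_system}. Since that correspondence is a bijection and the associated system of \eqref{eq:flat_sys_mirrored} is again \eqref{eq:sysEq_ext} (Remark \ref{rem:assoc_of_assoc}), it suffices to prove one direction. So assume \eqref{eq:sysEq_ext} is flat with flat output \eqref{eq:flat_output} and parameterization \eqref{eq:flat_param}. We show that $\hat y$ defined by \eqref{eq:flat_output_mirrored} is a flat output of \eqref{eq:flat_sys_mirrored}. The converse then follows by applying the same construction to \eqref{eq:flat_sys_mirrored}.

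\textbf{Step 1: the form of $\hat y$.} Fix $k$ and evaluate \eqref{eq:flat_output} at time $k-l$. Each argument is rewritten with \eqref{eq:correspondence}:
\begin{itemize}
\item $\zeta(k-l-i)=v(k+l+i)$, so backward shifts of $\zeta$ become forward shifts of $v$ of orders $1,\dots,Q_1$;
\item $x(k-l)=z(k+l+1)=\psi_x(z(k+l),v(k+l))$ and $u(k-l)=\eta(k+l)=\psi_u(z(k+l),v(k+l))$, using the system equations \eqref{eq:flat_sys_mirrored};
\item $u(k-l+i)=\eta(k+l-i)$, so forward shifts of $u$ become backward shifts of $\eta$ of orders $1,\dots,Q_2$.
\end{itemize}
Setting $\hat y(k+l):=y(k-l)$ gives exactly the first line of \eqref{eq:flat_output_mirrored}. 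Collecting the arguments gives the second line, which is a function of the form \eqref{eq:flat_output} for the system \eqref{eq:flat_sys_mirrored}. Here $\eta$ plays the role of $\zeta$, because $\eta=\psi_u(z,v)$ completes $z^+=\psi_x(z,v)$ to a local diffeomorphism, namely the inverse of \eqref{eq:sysEq_ext}. Likewise $v$ plays the role of $u$.

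\textbf{Step 2: the parameterization.} Evaluate \eqref{eq:flat_param} at the appropriate times and convert with \eqref{eq:correspondence} and \eqref{eq:y_mirror}. For $z$, we have $z(k+l)=x(k-l+1)=F_x(y_{[-R_1,R_2-1]})$ evaluated at $k-l+1$. This involves $y(k-l+1+j)$ for $-R_1\le j\le R_2-1$, that is, $\hat y(k+l-1-j)$, whose shifts range from $-R_2$ to $R_1-1$. This gives the first line of \eqref{eq:flat_param_mirrored}. For $v$, we use $v(k+l)=\zeta(k-l)=g(x(k-l),u(k-l))$ and substitute $F_x,F_u$ at time $k-l$. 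This yields shifts of $\hat y$ in the range $-R_2$ to $R_1$, which is the second line. Finally, $\eta=\psi_u(z,v)$ is parameterized by composition.

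\textbf{Step 3: genuine flatness.} We must check that $\hat y$ has independent trajectories (the footnote) and that the maps form a one-to-one correspondence. Because \eqref{eq:y_mirror} is a mere time reversal, arbitrary trajectories $\hat y$ correspond exactly to arbitrary $y$. Trajectories of \eqref{eq:flat_sys_mirrored} correspond bijectively to trajectories of \eqref{eq:sysEq_ext} by Theorem \ref{thm:time_mirrored_system}, and the latter correspond bijectively to arbitrary $y$ by flatness. Composing these bijections gives the required correspondence, and the composite maps are precisely the local functions of Steps 1 and 2.

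The main obstacle is the index bookkeeping, in particular the off-by-one shift $x(k+l)=z(k-l+1)$. One has to verify that the resulting shift ranges match Definition \ref{def:flatness} exactly: $z$ should depend on $\hat y$ only up to order one less than $v$ does, and the flat output should contain no backward shifts of $z,v$ and only backward shifts of $\eta$. I would handle this by carefully tracking the first line of \eqref{eq:flat_output_mirrored}, where $x,u$ are eliminated via $\psi_x,\psi_u$ so that $z(k+l+1)$ does not appear.
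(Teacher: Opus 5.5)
Your proposal is correct and follows essentially the same route as the paper. You transport the flat output and parameterization of \eqref{eq:sysEq_ext} through \eqref{eq:correspondence} and \eqref{eq:y_mirror}, and you eliminate $x$ and $u$ (i.e.\ $z^+$) via $z^+=\psi_x(z,v)$, $\eta=\psi_u(z,v)$ to reach the admissible form \eqref{eq:flat_output_mirrored}. You also shift the $z$-parameterization back by one and get the converse from Remark~\ref{rem:assoc_of_assoc}, as the paper does. Your index ranges check out, and your Step~3 just makes explicit the bijection argument the paper leaves implicit.
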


    \begin{pf} 
        \\ \eqref{eq:sysEq_ext} is flat $\Rightarrow$ \eqref{eq:flat_sys_mirrored} is flat:
        If the system \eqref{eq:sysEq_ext} is flat in the sense of Definition \ref{def:flatness} with the maps \eqref{eq:flat_output} and \eqref{eq:flat_param}, the one-to-one correspondence between the trajectories of the system variables $(x,u,\zeta)$ and the flat output $y$ at a fixed time step $k$ is given by
        \begin{equation}\label{eq:proof_output_orig}
        \scalemath{0.95}{
        \begin{aligned}
            y(k+l)=&\varphi(\zeta(k+(l-Q_1)),\ldots,\zeta(k+(l-1)),x(k+l),\\
            &u(k+l),u(k+(l+1)),\ldots,u(k+(l+Q_2)))
        \end{aligned}}
        \end{equation}
        and 
        \begin{equation}\label{eq:proof_param_orig}
        \scalemath{0.95}{
        \begin{aligned}
            x(k+l)&=F_x(y(k+(l-R_1)),\ldots,y(k+(l+R_2-1))) \\
            u(k+l)&=F_u(y(k+(l-R_1)),\ldots,y(k+(l+R_2))) \\
            \zeta(k+l)&=g(F_x(y(k+(l-R_1)),\ldots,y(k+(l+R_2-1))),\\
            &\quad F_u(y(k+(l-R_1)),\ldots,y(k+(l+R_2)))) \,.
        \end{aligned}}
        \end{equation}
        Applying the one-to-one correspondences \eqref{eq:correspondence} and \eqref{eq:y_mirror} to \eqref{eq:proof_output_orig} and \eqref{eq:proof_param_orig} yields
        \begin{equation} \label{eq:hat_y_seq}
        \scalemath{0.95}{
        \begin{aligned}
            \hat y(k-l)=&\varphi(v(k-(l-Q_1)),\ldots,v(k-(l-1)),z(k-l+1),\\
            &\eta(k-l),\eta(k-(l+1)),\ldots,\eta(k-(l+Q_2)))
        \end{aligned}}
        \end{equation}
        and
        \begin{equation} \label{eq:23}
        \scalemath{0.94}{
        \begin{aligned}
            z(k-l+1)&=F_x(\hat y(k-(l-R_1)),\ldots,\hat y(k-(l+R_2-1))) \\
            \eta(k-l)&=F_u(\hat y(k-(l-R_1)),\ldots,\hat y(k-(l+R_2))) \\
            v(k-l)&=g(F_x(\hat y(k-(l-R_1)),\ldots,\hat y(k-(l+R_2-1))),\\
            &\quad F_u(\hat y(k-(l-R_1)),\ldots,\hat y(k-(l+R_2))))\,.
        \end{aligned}}
        \end{equation}
        Finally, substituting \eqref{eq:sysEq_assoz_seq} into \eqref{eq:hat_y_seq} gives
        \begin{equation*}
        \scalemath{0.95}{
        \begin{aligned}
            \hat y(k-l)=&\varphi(v(k-l+Q_1),\ldots,v(k-l+1),\\
            &\psi_x(z(k-l),v(k-l)),\psi_u(z(k-l),v(k-l)),\\
            &\eta(k-l-1),\ldots,\eta(k-l-Q_2)) \,,
        \end{aligned}}
        \end{equation*}
        which coincides with \eqref{eq:flat_output_mirrored}. Moreover, by rewriting the first equation of \eqref{eq:23} as         
        \begin{equation*}
            \scalemath{0.95}{
            z(k-l)=F_x(\hat y(k-l+R_1-1),\ldots,\hat y(k-l-R_2)) \,,}
        \end{equation*}
        we can immediately observe the equivalence of \eqref{eq:23} with \eqref{eq:flat_param_mirrored}.
        Hence, \eqref{eq:flat_output_mirrored} is indeed a flat output of the associated system \eqref{eq:flat_sys_mirrored} with the corresponding parameterization \eqref{eq:flat_param_mirrored}.
        
        \eqref{eq:flat_sys_mirrored} is flat $\Rightarrow$ \eqref{eq:sysEq_ext} is flat: With Remark \ref{rem:assoc_of_assoc}, it is evident that the flatness of \eqref{eq:flat_sys_mirrored} with a flat output \eqref{eq:flat_output_mirrored} also implies the flatness of \eqref{eq:sysEq_ext} with a flat output
        \begin{equation} \label{eq:flat_output_other_dir}
        \begin{aligned}
            y&=\hat \varphi(u_{[Q_2,1]},f(x,u),g(x,u),\zeta_{[-1,-Q_1]})\\
            &=\varphi(\zeta_{[-Q_1,-1]},x,u,u_{[1,Q_2]}) \,.
        \end{aligned}
        \end{equation}
        The relation between the flatness properties of \eqref{eq:sysEq_ext} and \eqref{eq:flat_sys_mirrored} is illustrated in Fig. \ref{fig:corespondence_y_inv}.
        
        \begin{figure}[h!]
            \centering
            \begin{tikzpicture}[scale=1.5]

            \node at (0,0) {$(x,u,\zeta)$};
    
            \node at (0,-1.5) {$(y)$};

            \node at (1.9,0) {$(z,v,\eta)$};
    
            \node at (1.9,-1.5) {$(\hat y)$};
    
            \draw[-stealth,thick] (-0.1,-1.2) -- (-0.1,-0.3);
            \node[left] at (-0.1,-0.75) {\eqref{eq:flat_param}};

            \draw[-stealth,thick] (0.1,-0.3) -- (0.1,-1.2);
            \node[right] at (0.1,-0.75) {\eqref{eq:flat_output}};

            \draw[stealth-stealth,thick] (1.4,0) -- (0.5,0);
            \node[above] at (0.95,0) {\eqref{eq:correspondence}};

            \draw[stealth-stealth,thick] (0.5,-1.5) -- (1.4,-1.5);
            \node[above] at (0.95,-1.5) {\eqref{eq:y_mirror}};

            \draw[-stealth,thick] (1.8,-1.2) -- (1.8,-0.3);
            \node[left] at (1.8,-0.75) {\eqref{eq:flat_param_mirrored}};

            \draw[-stealth,thick] (2,-0.3) -- (2,-1.2);
            \node[right] at (2,-0.75) {\eqref{eq:flat_output_mirrored}};

            \end{tikzpicture}
            \caption{Relation between the flatness properties of the systems \eqref{eq:sysEq_ext} and \eqref{eq:flat_sys_mirrored}}
            \label{fig:corespondence_y_inv}
        \end{figure}
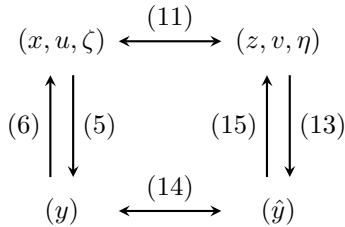
    \end{pf}

    In the following, we will apply the results of Theorem \ref{thm:mirrored} to the subclasses of forward- and backward-flat systems according to Definitions \ref{def:forward-flatness} and \ref{def:backward-flatness}.
    \begin{cor} \label{cor:flatness_of_mirrored_system}
        Consider a system \eqref{eq:sysEq_ext} that is flat according to Definition \ref{def:flatness} with a flat output \eqref{eq:flat_output} and the corresponding parameterizing map \eqref{eq:flat_param}.
        For the special cases of forward- and backward-flatness, the following applies:
        \begin{enumerate}[(i)]
            \item The associated system \eqref{eq:flat_sys_mirrored} is backward-flat with a backward-flat output $\hat{y}=\varphi(\psi_x(z,v),\psi_u(z,v),$ $\eta_{[-1,-Q_2]})=\hat \varphi(\eta_{[-Q_2,-1]},z,v)$ if and only if the original system is forward-flat according to Definition \ref{def:forward-flatness} with a forward-flat output \eqref{eq:forward-flat_output}.
            \item The associated system \eqref{eq:flat_sys_mirrored} is forward-flat with a forward-flat output $\hat{y}=\varphi(v_{[Q_1,1]},\psi_x(z,v),\psi_u(z,v))$ $=\hat \varphi(z,v,v_{[1,Q_1]})$ if and only if the original system is backward-flat according to Definition \ref{def:backward-flatness} with a backward-flat output \eqref{eq:backward-flat_output}.
        \end{enumerate}
    \end{cor}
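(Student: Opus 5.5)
The plan is to specialize Theorem~\ref{thm:mirrored} by tracking how the shift multi-indices transform under the correspondences \eqref{eq:correspondence} and \eqref{eq:y_mirror}. Theorem~\ref{thm:mirrored} already gives explicit formulas for the flat output \eqref{eq:flat_output_mirrored} and the parameterization \eqref{eq:flat_param_mirrored} of the associated system. The task therefore reduces to checking which index ranges become empty when the original system is forward- or backward-flat. For the converse directions, I will use Remark~\ref{rem:assoc_of_assoc}: the associated system of \eqref{eq:flat_sys_mirrored} is again \eqref{eq:sysEq_ext}. This lets me apply the same argument with the roles of the two systems exchanged, as in the second half of the proof of Theorem~\ref{thm:mirrored}.

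For (i), ``$\Leftarrow$'': suppose \eqref{eq:sysEq_ext} is forward-flat. Then the flat output \eqref{eq:flat_output} contains no $\zeta$-shifts, so $Q_1=0$, and $R_1=(0,\ldots,0)$. Substituting into \eqref{eq:flat_output_mirrored}, the $v_{[1,Q_1]}$ block disappears and we obtain $\hat y=\hat\varphi(\eta_{[-Q_2,-1]},z,v)$, which has exactly the form \eqref{eq:backward-flat_output} for the system \eqref{eq:flat_sys_mirrored}. Here $\eta$ plays the role of $\zeta$, since $\eta=\psi_u(z,v)$ is the complementary output of the associated system. Substituting into \eqref{eq:flat_param_mirrored} gives $z=F_z(\hat y_{[-R_2,-1]})$ and $v=F_v(\hat y_{[-R_2,0]})$. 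This is the parameterization required by Definition~\ref{def:backward-flatness}, with $R_2$ in the role of $R_1$.

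For (i), ``$\Rightarrow$'': suppose \eqref{eq:flat_sys_mirrored} is backward-flat with output $\hat\varphi(\eta_{[-Q_2,-1]},z,v)$. By Remark~\ref{rem:assoc_of_assoc} we apply formula \eqref{eq:flat_output_other_dir}. The $\zeta$-block is then empty and the $u$-shifts are forward, so $y=\varphi(x,u,u_{[1,Q_2]})$. Interchanging the roles of the two systems in \eqref{eq:flat_param_mirrored} shows that the parameterization involves only forward shifts, ranging up to $R_2-1$ and $R_2$. Part (ii) is the same argument with forward and backward interchanged. If \eqref{eq:sysEq_ext} is backward-flat, then $Q_2=0$ and $R_2=0$. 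Hence $\hat y=\hat\varphi(z,v,v_{[1,Q_1]})$, $z=F_z(\hat y_{[0,R_1-1]})$ and $v=F_v(\hat y_{[0,R_1]})$, which is forward-flat in the sense of Definition~\ref{def:forward-flatness}. The converse again follows from Remark~\ref{rem:assoc_of_assoc}.

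The main obstacle is the index bookkeeping. The ranges in \eqref{eq:flat_param_mirrored} are written in reversed order, such as $\hat y_{[R_1-1,-R_2]}$, and I must verify that setting $R_1=0$ really leaves $z$ depending only on $\hat y_{[-R_2,-1]}$ and not on $\hat y$ itself. This follows from the shifted identity $z(k-l+1)=F_x(\ldots)$ in the proof of Theorem~\ref{thm:mirrored}. That identity moves the window for $z$ by one step relative to that for $v$, and the resulting offset is precisely what Definition~\ref{def:backward-flatness} requires.
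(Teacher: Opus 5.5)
Your proposal is correct and follows essentially the same route as the paper, which treats the corollary as a direct specialization of Theorem~\ref{thm:mirrored}. In that specialization one sets $Q_1=0$, $R_1=(0,\ldots,0)$ (resp.\ $Q_2=0$, $R_2=(0,\ldots,0)$) in \eqref{eq:flat_output_mirrored} and \eqref{eq:flat_param_mirrored}, with $\eta$ taking the role of $\zeta$, and obtains the converse directions from Remark~\ref{rem:assoc_of_assoc} together with \eqref{eq:flat_output_other_dir}. Your check that $z=F_x(\hat y_{[R_1-1,-R_2]})$ reduces to a dependence on $\hat y_{[-R_2,-1]}$ only (and symmetrically on $\hat y_{[0,R_1-1]}$ in case (ii)) is exactly the index bookkeeping this requires.
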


\section{Testing backward-flatness}  \label{sec:four}

    \afterpage{
    \begin{figure*}[!b]
        \begin{equation*}\label{eq:ex2_Jacobian}
        \scalemath{0.83}{
        \begin{aligned}
            &\partial_{y_{[-R_1,0]}}\left[\begin{array}{c}F_{x}(y_{[-R_1,-1]})\\F_{u}(y_{[-R_1,0]})\\g(F_x(y_{[-R_1,-1]}),F_u(y_{[-R_1,0]}))\end{array}\right]=
            &\left[\begin{array}{ccccccc}
            \usegray\tikzmarkin{r}(0.9,-0.1)(-1.2,0.35) 0 & 0 & 1 & \usegraydark\tikzmarkin{t}(1.35,-0.1)(-0.8,0.35) 0 & 0 & 0 & 0 \\
            \tfrac{-y^2_{[-1]}+y^2_{[-2]}}{\left(y^1_{[-1]}+y^1_{[-3]}\right)^2} & -\tfrac{1}{y^1_{[-1]}+y^1_{[-3]}} & 0 & \tfrac{1}{y^1_{[-1]}+y^1_{[-3]}} & \tfrac{-y^2_{[-1]}+y^2_{[-2]}}{\left(y^1_{[-1]}+y^1_{[-3]}\right)^2} & 0 & 0 \\
            \tfrac{\left(y^2_{[-1]}-y^2_{[-2]}\right) y^1_{[-1]}}{\left(y^1_{[-1]}+y^1_{[-3]}\right)^2} & \tfrac{y^1_{[-1]}}{y^1_{[-1]}+y^1_{[-3]}} & 0 & \tfrac{y^1_{[-3]}}{y^1_{[-1]}+y^1_{[-3]}} & -\tfrac{y^1_{[-3]}\left(y^2_{[-1]}-y^2_{[-2]}\right)}{\left(y^1_{[-1]}+y^1_{[-3]}\right)^2} & 0 & 0 \\
            0 & 0 \tikzmarkend{r} & 0 & 0 & 1 \tikzmarkend{t} & 0 & 0 \\
            0 & 0 & 0 & 0 & 0 & \usegraydark\tikzmarkin{u}(0.1,-0.4)(-0.65,0.35) 0 & 1 \\
            0 & 0 & \tfrac{-y^2_{[0]}+y^2_{[-1]}}{\left(y^1_{[0]}+y^1_{[-2]}\right)^2} & -\tfrac{1}{y^1_{[0]}+y^1_{[-2]}} & 0 & \tfrac{1}{y^1_{[0]}+y^1_{[-2]}} & \tfrac{-y^2_{[0]}+y^2_{[-1]}}{\left(y^1_{[0]}+y^1_{[-2]}\right)^2} \tikzmarkend{u} \\
            \usegray\tikzmarkin{s}(0.1,-0.45)(-1,0.35) 0 & 0 & 1 & 0 & 0 & 0 & 0 \\
            \tfrac{-y^2_{[-1]}+y^2_{[-2]}}{\left(y^1_{[-1]}+y^1_{[-3]}\right)^2} & -\tfrac{1}{y^1_{[-1]}+y^1_{[-3]}} \tikzmarkend{s} & 0 & \tfrac{1}{y^1_{[-1]}+y^1_{[-3]}} & \tfrac{-y^2_{[-1]}+y^2_{[-2]}}{\left(y^1_{[-1]}+y^1_{[-3]}\right)^2} & 0 & 0
            \end{array}\right]
        \end{aligned}
        }
        \end{equation*}
        \caption{Jacobian matrix \eqref{eq:Jacobian_ext} of the backward-flat system \eqref{eq:sysEq_ex1}}
        \label{fig:Jacobian1_ex}
    \end{figure*}
    \begin{figure*}[!b]
        \begin{equation*}\label{eq:ex2_Jacobian_mirrored}
            \scalemath{0.83}{
            \begin{aligned}
                &\partial_{\hat y_{[0,R_1]}}\left[\begin{array}{c}F_{x}(\hat y_{[R_1-1,0]})\\ g(F_x(\hat y_{[R_1,1]}),F_u(\hat y_{[R_1,0]}))\\F_{u}(\hat y_{[R_1,0]})\end{array}\right]=
                &\left[\begin{array}{ccccccc}
                \usegraydark\tikzmarkin{v}(0.65,-0.1)(-1.05,0.35) 0 & 0 & 1 & \usegray\tikzmarkin{x}(0.9,-0.1)(-0.7,0.35) 0 & 0 & 0 & 0 \\
                \tfrac{-\hat y^2_{[0]}+\hat y^2_{[1]}}{\left(\hat y^1_{[0]}+\hat y^1_{[2]}\right)^2} & \tfrac{1}{\hat y^1_{[0]}+\hat y^1_{[2]}} & 0 & -\tfrac{1}{\hat y^1_{[0]}+\hat y^1_{[2]}} & \tfrac{-\hat y^2_{[0]}+\hat y^2_{[1]}}{\left(\hat y^1_{[0]}+\hat y^1_{[2]}\right)^2} & 0 & 0 \\
                -\tfrac{\hat y^1_{[2]}\left(\hat y^2_{[0]}-\hat y^2_{[1]}\right)}{\left(\hat y^1_{[0]}+\hat y^1_{[2]}\right)^2} & \tfrac{\hat y^1_{[2]}}{\hat y^1_{[0]}+\hat y^1_{[2]}} & 0 & \tfrac{\hat y^1_{[0]}}{\hat y^1_{[0]}+\hat y^1_{[2]}} & \tfrac{\left(\hat y^2_{[0]}-\hat y^2_{[1]}\right) \hat y^1_{[0]}}{\left(\hat y^1_{[0]}+\hat y^1_{[2]}\right)^2} & 0 & 0 \\
                1 & 0 \tikzmarkend{v} & 0 & 0 & 0 \tikzmarkend{x} & 0 & 0 \\
                0 & 0 & 0 & 0 & 1 & \usegray\tikzmarkin{y}(0.1,-0.45)(-0.7,0.35) 0 & 0 \\
                0 & 0 & \tfrac{-\hat y^2_{[1]}+\hat y^2_{[2]}}{\left(\hat y^1_{[1]}+\hat y^1_{[3]}\right)^2} & \tfrac{1}{\hat y^1_{[1]}+\hat y^1_{[3]}} & 0 & -\tfrac{1}{\hat y^1_{[1]}+\hat y^1_{[3]}} & \tfrac{-\hat y^2_{[1]}+\hat y^2_{[2]}}{\left(\hat y^1_{[1]}+\hat y^1_{[3]}\right)^2} \tikzmarkend{y} \\
                \usegraydark\tikzmarkin{w}(0.1,-0.45)(-0.8,0.35) 1 & 0 & 0 & 0 & 0 & 0 & 0 \\
                \tfrac{-\hat y^2_{[0]}+\hat y^2_{[1]}}{\left(\hat y^1_{[0]}+\hat y^1_{[2]}\right)^2} & \tfrac{1}{\hat y^1_{[0]}+\hat y^1_{[2]}} \tikzmarkend{w} & 0 & -\tfrac{1}{\hat y^1_{[0]}+\hat y^1_{[2]}} & \tfrac{-\hat y^2_{[0]}+\hat y^2_{[1]}}{\left(\hat y^1_{[0]}+\hat y^1_{[2]}\right)^2} & 0 & 0
            \end{array}\right]
            \end{aligned}
            }
        \end{equation*}
        \caption{Jacobian matrix \eqref{eq:Jacobian_ext} of the forward-flat associated system \eqref{eq:sysEq_ex1_mirrored}}
        \label{fig:Jacobian2_ex}
    \end{figure*}}

    \subsection{An Algorithmic Test for Backward-Flatness}
    Based on item (ii) of Corollary \ref{cor:flatness_of_mirrored_system}, the backward-flatness of a discrete-time system \eqref{eq:sysEq_ext} can be tested indirectly by checking the associated system \eqref{eq:flat_sys_mirrored} for forward-flatness.\\
    The forward-flatness of the associated system can be systematically checked by computing the sequence of distributions generated by Algorithm \ref{alg:definition_sequence} and applying Theorem \ref{thm:condition}. In case of a positive result, a forward-flat output can be derived based on these distributions, as demonstrated in \cite{KolarDiwoldSchoberl:2023}. The corresponding backward-flat output of the original system follows from \eqref{eq:flat_output_other_dir}.
    This systematic approach for testing backward-flatness and deriving a backward-flat output is summarized in the following algorithm.
    
    \begin{alg}\label{alg:testing_backward_flatness} Given a discrete-time system \eqref{eq:sysEq_ext}:\textbf{\vspace{-0.2cm}
    	}
    	\begin{enumerate}
    		\item Calculate the associated system \eqref{eq:flat_sys_mirrored}.
            \item Check \eqref{eq:flat_sys_mirrored} for forward-flatness by applying Algorithm \ref{alg:definition_sequence} and Theorem \ref{thm:condition}.
            \begin{enumerate}
                \item[(a)] Stop if \eqref{eq:flat_sys_mirrored} is not forward-flat. With item (ii) of Corollary \ref{cor:flatness_of_mirrored_system} it follows that the original system is not backward-flat.
    		      \item[(b)] If \eqref{eq:flat_sys_mirrored} is forward-flat, a forward-flat output $\hat y=\hat \varphi(z)$ can be derived as in \cite{KolarDiwoldSchoberl:2023}.\footnote{According to Theorem 12 in \cite{KolarDiwoldSchoberl:2021}, every forward-flat system admits a flat output which only depends on the state variables.} According to item (ii) of Corollary \ref{cor:flatness_of_mirrored_system}, the original system is backward-flat.
            \end{enumerate}
            \item[(3)] With $\hat y=\hat \varphi(z)$ and \eqref{eq:flat_output_other_dir}, we obtain $y=\hat\varphi(f(x,u))=\varphi(x,u)$.
    	\end{enumerate}
    \end{alg}
    \vspace{1ex}
    \begin{rem}
        It should be noted that in contrast to the forward-flat output $\hat y=\hat \varphi(z)$, which depends only on the state variables, the corresponding backward-flat output $y=\varphi(x,u)$ depends both on state and input variables. In other words, every backward-flat system admits an $(x,u)$-flat output.
    \end{rem}
    
    With the following academic example, we illustrate our results by applying Algorithm \ref{alg:testing_backward_flatness} to test a discrete-time system for backward-flatness and to derive a corresponding backward-flat output.

    \begin{exmp} \label{ex:academic}
    Consider the system
    \begin{equation} \label{eq:sysEq_ex1}
		\begin{aligned}
			x^{1,+} &= x^4 \\
			x^{2,+} &= u^2 \\
			x^{3,+} &= x^3+x^2 x^4+x^1 u^2 \\
			x^{4,+} &= u^1
		\end{aligned}
	\end{equation}
    with $n=4$, $m=2$, and $g^1(x,u)=x^1,\,g^2(x,u)=x^2$.
    First, one would check whether the system is forward-flat by using Algorithm \ref{alg:definition_sequence}, which yields the sequences
    \begin{align}
	E_{0} & =\mathrm{span}\left\{ \partial_{u^{1}},\partial_{u^{2}}\right\} \nonumber \\
	E_{1} & =\mathrm{span}\left\{ \partial_{x^{4}},\partial_{u^{1}},\partial_{u^{2}}\right\} \nonumber \\
    E_{2} & =\mathrm{span}\left\{ \partial_{x^{4}},\partial_{u^{1}},\partial_{u^{2}}\right\} \nonumber 
    \end{align}
    and
    \begin{equation*}
    	D_{0}=\mathrm{span}\left\{ \partial_{u^{1}}\right\} \,,\:D_{1}=\mathrm{span}\left\{ \partial_{u^{1}}\right\}
    \end{equation*}
    and therefore stops at $k=2$. Since $\operatorname{dim}(E_2)=\operatorname{dim}(E_1)$ and $\operatorname{dim}(E_2)\neq n+m$, the system \eqref{eq:sysEq_ex1} is not forward-flat.\\
    As the test for forward-flatness did not yield a positive result, next we can check whether the system is backward-flat by using Algorithm \ref{alg:testing_backward_flatness}.
    
    Step 1) The associated system is obtained as described in Section \ref{sec:three} by first computing the inverse of \eqref{eq:sysEq_ex1}, i.e.
    \begin{equation*}
        \begin{aligned}
			x^{1} & = \zeta^1 \\
			x^{2} & = \zeta^2 \\
			x^{3} & = x^{3,+}-x^{1,+} \zeta^2-x^{2,+} \zeta^1 \\
			x^{4} & = x^{1,+}
		\end{aligned}
        \hspace{5ex}
        \begin{aligned}
			u^1 & = x^{4,+} \\
            u^2 & = x^{2,+}
		\end{aligned}
	\end{equation*}
    with $x=\psi_x(x^+,\zeta)$, $u=\psi_u(x^+,\zeta)$. Using \eqref{eq:flat_sys_mirrored}, we obtain the associated system
    \begin{equation}\label{eq:sysEq_ex1_mirrored}
        \begin{aligned}
			z^{1,+} & = v^1 \\
			z^{2,+} & = v^2 \\
			z^{3,+} & = z^{3}-z^{1} v^2-z^{2} v^1 \\
			z^{4,+} & = z^{1}\,.
		\end{aligned}
        \hspace{5ex}
        \begin{aligned}
			\eta^1 & = z^{4} \\
            \eta^2 & = z^{2}
		\end{aligned}
	\end{equation}
   
    Step 2) Applying Algorithm~\ref{alg:definition_sequence} yields the sequence of involutive distributions
    \begin{align}
	E_{0} & =\mathrm{span}\left\{ \partial_{v^{1}},\partial_{v^{2}}\right\} \nonumber \\
	E_{1} & =\mathrm{span}\left\{ \partial_{z^{2}}-z^4\partial_{z^{3}},\partial_{v^{1}},\partial_{v^{2}}\right\} \label{eq:E_sequ_ex}\\
	E_{2} & =\mathrm{span}\left\{ \partial_{z^{3}},\partial_{z^{1}},\partial_{z^{2}},\partial_{v^{1}},\partial_{v^{2}}\right\} \nonumber \\
	E_{3} & =\mathrm{span}\left\{ \partial_{z^{3}},\partial_{z^{4}},\partial_{z^{1}},\partial_{z^{2}},\partial_{v^{1}},\partial_{v^{2}}\right\} \nonumber 
    \end{align}
    and the sequence of the largest projectable subdistributions 
    \begin{equation*}
    	D_{0}=\mathrm{span}\left\{ \partial_{v^{2}}\right\} \,,\:D_{1}=E_{1}\,,\:D_{2}=E_{2}\,.\label{eq:D_sequ_ex}
    \end{equation*}
    According to Theorem \ref{thm:condition}, because of $\mathrm{dim}(E_{3})=n+m$ the system \eqref{eq:sysEq_ex1_mirrored} is forward-flat. Consequently, by item (ii) of Corollary \ref{cor:flatness_of_mirrored_system}, the original system is backward-flat.\\
    A possible forward-flat output of \eqref{eq:sysEq_ex1_mirrored} can be determined as demonstrated in \cite{KolarDiwoldSchoberl:2023} by straightening out the sequence of distributions \eqref{eq:E_sequ_ex} with the state transformation $\bar z^1_3=z^4,\, \bar z^1_2=z^3+z^2 z^4,\, \bar z^2_2=z^1,\, \bar z^1_1=z^2$. In these coordinates, the transformed system reads as
    \begin{equation}\label{eq:sysEq_ex1_triangular}
        \begin{aligned}
        \bar z_{3}^{1,+} & = \bar z^2_2\\
        \bar z_{2}^{1,+} & = -\bar z^1_1(v^1+\bar z^1_3)-\bar z^1_2\\
        \bar z_{2}^{2,+} & = v^1\\
        \bar z_{1}^{1,+} & = v^2\,.
        \end{aligned}
    \end{equation}
    Because of the triangular structure of \eqref{eq:sysEq_ex1_triangular}, a possible flat output is given by $\hat{y}=(\bar z^1_3,\bar z^1_2)=(z^4,z^3+z^2 z^4)$.
    
    Step 3) With $\hat{y}=\hat \varphi(z)=(z^4,z^3+z^2 z^4)$ and \eqref{eq:flat_output_other_dir}, the backward-flat output $y$ of the original system \eqref{eq:sysEq_ex1} is obtained as the composition $y=\hat \varphi(f(x,u))$, that is,
    \begin{equation}\label{eq:ex1_y}
        y=\varphi(x,u)=(u^1,x^3+x^2 x^4+u^2(x^1+u^1))\,.
    \end{equation}
    
    We can verify that $y$ is indeed a backward-flat output of the system \eqref{eq:sysEq_ex1} by computing the backward-shifts of the components of the flat output
    \begin{equation*}
        \scalemath{1}{
			\begin{array}{rlrl}
				y_{[-3]}^1 &=\zeta^1_{[-1]} & \hspace{4ex} y_{[-2]}^2 &=x^3-x^2\zeta^1_{[-1]} \\                
				y_{[-2]}^1 &=x^1 & \hspace{4ex} y_{[-1]}^2 &=x^3+x^2 x^4 \\
				y_{[-1]}^1 &=x^4 & \hspace{4ex} y^2 &=x^3+x^2 x^4+u^2(x^1+u^1)\\
				y^1 &= u^1\,.
			\end{array}
            }
		\end{equation*}
    This set of equations can be solved for $x^1,\ldots, x^4, u^1, u^2$ and $\zeta^1_{[-1]}$. Hence, the system \eqref{eq:sysEq_ex1} is indeed backward-flat with the backward-flat output \eqref{eq:ex1_y} and the backward-flat parameterization being of the form $x=F_{x}(y_{[-R_1,-1]})$, $u=F_{u}(y_{[-R_1,0]})$ with $R_1=(3,2)$.
    \end{exmp}

    \subsection{The Jacobian of the Parameterizing Map}
    In \cite{SchrotshamerKolarSchoeberl:2025}, it has been shown that the properties of forward-flatness, backward-flatness and flatness according to Definition \ref{def:flatness} imply rank conditions regarding certain submatrices of the Jacobian of the flat parameterization. These rank conditions are essential for deriving a simple type of linearizing dynamic extension for flat discrete-time systems with two inputs.
    In the following, we show that the main findings of the current contribution --- most notably that the parameterizing maps of a flat system \eqref{eq:sysEq_ext} and its associated system \eqref{eq:flat_sys_mirrored} are related via the one-to-one correspondences \eqref{eq:correspondence} and \eqref{eq:y_mirror} established in Section \ref{sec:three} --- align nicely with the rank conditions presented in \cite{SchrotshamerKolarSchoeberl:2025}.\\
    In Proposition \ref{prop:Jacobi_R2}, we first introduce an additional rank condition, before consolidating these findings in the subsequent corollary.
    
    \begin{prop} \label{prop:Jacobi_R2}
	   For the Jacobian matrix of the parameterizing map \eqref{eq:flat_param} of a system \eqref{eq:sysEq} that is flat according to Definition \ref{def:flatness}, the submatrices $\partial_{y_{[R_2-1]}} F_x$ and $\partial_{y_{[R_2]}} F_u$ have the same rank.
    \end{prop}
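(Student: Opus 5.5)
We will use that the parameterization is consistent with the system equations: since $x=F_x(y_{[-R_1,R_2-1]})$ and $u=F_u(y_{[-R_1,R_2]})$ parameterize solutions, the forward shift of $F_x$ must coincide with $f(F_x,F_u)$ identically in the (free) variables $y$. That is, we have the identity
\begin{equation*}
F_x(y_{[-R_1+1,R_2]}) = f\big(F_x(y_{[-R_1,R_2-1]}),F_u(y_{[-R_1,R_2]})\big)\,,
\end{equation*}
where the left-hand side is $F_x$ with all arguments shifted by one. Differentiating this identity with respect to the highest forward-shifts $y_{[R_2]}$ gives the key relation. On the left, $y_{[R_2]}$ enters only through the shifted last argument, so the derivative is the matrix $\partial_{y_{[R_2-1]}}F_x$, evaluated at shifted arguments. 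On the right, $F_x$ does not depend on $y_{[R_2]}$, so by the chain rule we obtain $\partial_u f\cdot\partial_{y_{[R_2]}}F_u$.

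The resulting relation is
\begin{equation*}
\big(\partial_{y_{[R_2-1]}}F_x\big)^{+} = \partial_u f\,\partial_{y_{[R_2]}}F_u\,.
\end{equation*}
Since $\Rank(\partial_u f)=m$, the $n\times m$ matrix $\partial_u f$ has full column rank. Hence left multiplication by it preserves rank, and the right-hand side has the same rank as $\partial_{y_{[R_2]}}F_u$. The rank of the left-hand side equals the rank of $\partial_{y_{[R_2-1]}}F_x$ evaluated at the shifted point. Because the $y$'s are independent coordinates, this is just the same function evaluated at a relabelled point. Locally around the equilibrium, and generically where ranks are constant, the two submatrices therefore have equal rank.

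We would present the component-wise multi-index case with care. Here $y_{[R_2]}$ means $y^j_{[r_{2,j}]}$ for each $j$, and $y_{[R_2-1]}$ means $y^j_{[r_{2,j}-1]}$. The columns must match: the column for $y^j_{[r_{2,j}]}$ on the right corresponds, after the shift, to the column for $y^j_{[r_{2,j}-1]}$ of $F_x$ on the left. We must also check that no other column of $F_x$ involves $y^j_{[r_{2,j}]}$ after the shift. This holds because $F_x$ depends on $y^j$ only up to order $r_{2,j}-1$. If $r_{2,j}=0$, the corresponding column of $\partial_{y_{[R_2]}}F_u$ is $\partial_{y^j}F_u$ and the column of $F_x$ is absent, i.e. zero. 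In that case we verify that the identity still forces the right-hand column $\partial_u f\,\partial_{y^j_{[0]}}F_u$ to vanish, so that $\partial_{y^j}F_u$ vanishes there too.

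The main obstacle is precisely this bookkeeping of the multi-index orders, together with justifying the evaluation at the shifted point. We expect the rank to be taken locally and as constant near the equilibrium, so evaluation at the shifted point should not matter, and otherwise the argument is a direct chain-rule computation.
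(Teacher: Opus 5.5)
Your core computation is the paper's proof. You differentiate $\delta(F_x)=f(F_x,F_u)$ with respect to $y_{[R_2]}$ and use that $F_x$ does not depend on $y_{[R_2]}$, which gives $\delta(\partial_{y_{[R_2-1]}}F_x)=(\partial_u f\circ F)\,\partial_{y_{[R_2]}}F_u$. Full column rank of $\partial_u f$ and rank invariance under the shift then finish the argument.

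Your special treatment of the case $r_{2,j}=0$ is wrong, and the verification you announce there would fail. In Definition \ref{def:flatness} the parameterization $x=F_x(y_{[-R_1,R_2-1]})$ contains backward shifts. So for $r_{2,j}=0$ the relevant column is $\partial_{y^j_{[-1]}}F_x$, which is present whenever $r_{1,j}\geq 1$; it is not absent. The shift carries $y^j_{[-1]}$ to $y^j_{[0]}$, and the identity yields $\delta(\partial_{y^j_{[-1]}}F_x)=(\partial_u f\circ F)\,\partial_{y^j}F_u$, with no reason for either side to vanish.

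In fact, for backward-flat systems $R_2=(0,\ldots,0)$ and both submatrices have rank $m$ (Corollary \ref{cor:flatness_rank}(ii)). In Example \ref{ex:academic}, $y^1=u^1$, so $\partial_{y^1}F_u\neq 0$. This is exactly the case the proposition is meant to serve.

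The column of $F_x$ is genuinely absent only if also $r_{1,j}=0$. Your conclusion $\partial_{y^j}F_u=0$ would then mean that $y^j$ does not enter the parameterization at all, which is impossible for a flat output. Simply drop the case distinction. The uniform chain-rule argument already covers $r_{2,j}=0$, because $y^j_{[r_{2,j}-1]}$ is then just a backward shift of $y^j$.
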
 
    \begin{pf}
        We start with the identity $\delta\left(F_{x}\right) =f\left(F_{x}, F_{u}\right)$\footnote{The forward- and backward-shift operators, denoted by $\delta$ and $\delta^{-1}$, are defined on spaces with the coordinates $(\ldots,\zeta_{[-1]},x,u,u_{[1]},\ldots)$ and $(\ldots,y_{[-1]},y,y_{[1]},\ldots)$ as in \cite{DiwoldKolarSchoberl:2020}.}, which holds for all flat systems.
        Differentiating both sides w.r.t. $y_{[R_2]}$ and rewriting the resulting equation yields
        \begin{equation*}
        \begin{aligned}
            \partial_{y_{[R_2]}} \delta\left(F_{x}\right) & =\partial_{y_{[R_2]}} f \circ F\\
            \delta(\delta^{-1}(\partial_{y_{[R_2]}} \delta(F_{x})))&=\left(\partial_{u} f \circ F\right)\partial_{y_{[R_2]}} F_{u}\\
        	\delta(\partial_{y_{[R_2-1]}} F_{x}) & =\left(\partial_{u} f \circ F\right) \partial_{y_{[R_2]}} F_{u}
        \end{aligned}
        \end{equation*}
        because the flat parameterization \eqref{eq:flat_param} of the state variables $x$ is independent of the highest forward-shifts $y_{[R_2]}$.
        Since $\operatorname{rank}\left(\partial_{u} f \circ F\right)=\operatorname{rank}(\partial_{u} f)=m$, and the forward-shift does not change the rank, it follows that $\operatorname{rank}(\partial_{y_{[R_2-1]}} F_{x})=\operatorname{rank}(\partial_{y_{[R_2]}} F_{u})$.
    \end{pf}

    The following corollary is an immediate consequence of Proposition \ref{prop:Jacobi_R2} and Propositions 7 and 16 from \cite{SchrotshamerKolarSchoeberl:2025}.
    
    \begin{cor}\label{cor:flatness_rank}
        Consider a flat system \eqref{eq:sysEq_ext} according to Definition \ref{def:flatness} with a flat output \eqref{eq:flat_output}. For the special cases of forward- and backward-flatness, the Jacobian matrix of the flat parameterization \eqref{eq:flat_param} (including $\zeta=g(F_x,F_u)$), i.e.
        \begin{equation}\label{eq:Jacobian_ext}
        \scalemath{0.89}{
        \begin{aligned}
            &\partial_{y_{[-R_1,R_2]}}\left[\begin{array}{c}F_{x}\\[0.5mm] F_{u}\\[0.5mm] g(F_x,F_u)\end{array}\right]=\\
            &\left[\begin{array}{cccc}
    		\partial_{y_{[-R_1]}} F_x & \cdots & \partial_{y_{[R_2-1]}} F_x & \underline{0} \\[1.4mm]
    		\partial_{y_{[-R_1]}} F_u & \cdots & \partial_{y_{[R_2-1]}} F_u & \partial_{y_{[R_2]}} F_u \\[1.4mm]
            \partial_{y_{[-R_1]}} g(F_x,F_u) & \cdots & \partial_{y_{[R_2-1]}} g(F_x,F_u) & \partial_{y_{[R_2]}} g(F_x,F_u)
        	\end{array}\right] \,,
            \end{aligned}
        }
        \end{equation}
        has the following properties:
        \begin{enumerate}[(i)]
            \item If the system is forward-flat according to Definition \ref{def:forward-flatness} with a forward-flat output \eqref{eq:forward-flat_output}, then $\Rank (\partial_{y_{[-R_1]}}F_x)=\Rank (\partial_{y_{[-R_1]}} g(F_x,F_u))=m$ with $R_1=(0,\ldots,0)$ and $\Rank (\partial_{y_{[R_2-1]}}F_x)=\Rank (\partial_{y_{[R_2]}} F_u)<m$.
            \item If the system is backward-flat according to Definition \ref{def:backward-flatness} with a backward-flat output \eqref{eq:backward-flat_output}, then $\Rank (\partial_{y_{[-R_1]}}F_x)=\Rank (\partial_{y_{[-R_1]}} g(F_x,F_u))<m$ and $\Rank (\partial_{y_{[R_2-1]}}F_x)=\Rank (\partial_{y_{[R_2]}} F_u)=m$ with $R_2=(0,\ldots,0)$.
        \end{enumerate}
    \end{cor}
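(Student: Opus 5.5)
Corollary~\ref{cor:flatness_rank} combines four ingredients: Proposition~\ref{prop:Jacobi_R2}, a dual identity at the lowest backward-shift, the full-rank statements (Propositions 7 and 16 of \cite{SchrotshamerKolarSchoeberl:2025}), and the strict rank deficiency at the opposite end. I would derive the equalities directly and then invoke the cited propositions only for the full-rank and rank-deficiency statements.

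\textbf{Equality at the upper end.} The identity $\Rank(\partial_{y_{[R_2-1]}}F_x)=\Rank(\partial_{y_{[R_2]}}F_u)$ holds for every flat system by Proposition~\ref{prop:Jacobi_R2}. It therefore holds in both cases (i) and (ii).

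\textbf{Equality at the lower end.} For $\Rank(\partial_{y_{[-R_1]}}F_x)=\Rank(\partial_{y_{[-R_1]}}g(F_x,F_u))$ I would mirror the proof of Proposition~\ref{prop:Jacobi_R2} using the inverse map \eqref{eq:sysEq_ext_inv}. From $x=\psi_x(x^+,\zeta)$ we get the identity $F_x=\psi_x(\delta(F_x),g(F_x,F_u))$.

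Differentiate this identity with respect to $y_{[-R_1]}$. The term $\delta(F_x)$ depends only on $y_{[-R_1+1,R_2]}$, so it contributes nothing. This leaves
\begin{equation*}
\partial_{y_{[-R_1]}}F_x=(\partial_\zeta\psi_x\circ\cdot)\,\partial_{y_{[-R_1]}}g(F_x,F_u).
\end{equation*}
The matrix $\partial_\zeta\psi_x$ is $n\times m$ and need not have rank $m$, so this only gives "$\leq$". To close the gap I would pair it with the companion identity $\zeta=g(\psi_x,\psi_u)$, evaluated as $g(F_x,F_u)$ at the same time step. Differentiating that identity with respect to $y_{[-R_1]}$ expresses $\partial_{y_{[-R_1]}}g$ through $\partial_{y_{[-R_1]}}F_x$, which would give the reverse inequality.

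A cleaner alternative is to apply Proposition~\ref{prop:Jacobi_R2} to the associated system \eqref{eq:flat_sys_mirrored}, using Theorem~\ref{thm:mirrored}. By \eqref{eq:flat_param_mirrored}, the roles of $(F_x,F_u,g)$ and $(R_1,R_2)$ are exchanged with $(F_z,F_\eta,F_v)$ and $(R_2,R_1)$:
\begin{itemize}
\item the highest shift $\hat y_{[R_1-1]}$ in $F_z$ corresponds to $y_{[-R_1]}$ in $F_x$;
\item the highest shift $\hat y_{[R_1]}$ in $F_v$ corresponds to $y_{[-R_1]}$ in $g(F_x,F_u)$.
\end{itemize}
Here $F_v$ plays the role of the input map, since $v$ is the input of \eqref{eq:flat_sys_mirrored}. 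Proposition~\ref{prop:Jacobi_R2} for the associated system then yields exactly the desired lower-end equality, because $\partial_v\psi_x$ has rank $m$. I would take this route.

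\textbf{Full rank $m$ and strict deficiency.}
\begin{itemize}
\item In case (i), $R_1=0$ and $\partial_{y}F_x$ has rank $m$; this is Proposition 7 of \cite{SchrotshamerKolarSchoeberl:2025}.
\item In case (ii), $R_2=0$ and $\partial_yF_u$ has rank $m$; this is Proposition 16, or equivalently Proposition 7 transported through Corollary~\ref{cor:flatness_of_mirrored_system}(ii).
\end{itemize}
The strict inequalities $<m$ at the opposite end I would also take from these propositions. Failing that, I would argue directly: if the top-shift block had rank $m$, one could eliminate the highest shifts and shrink $R_2$ (resp.\ $R_1$), contradicting minimality of the parameterization orders.

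\textbf{Main obstacle.} The delicate step is this last rank-deficiency claim. It requires minimality of $R$, so I would rely on the cited propositions for it.
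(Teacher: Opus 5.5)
\textbf{How your route differs from the paper's.} The paper's proof is a single citation. The upper-end equality comes from Proposition~\ref{prop:Jacobi_R2}. Everything else is taken from Propositions 7 and 16 of \cite{SchrotshamerKolarSchoeberl:2025}: the lower-end equality, the rank-$m$ statements and the strict deficiencies. Your plan agrees on the upper end and on citing those propositions for full rank and deficiency. Where you genuinely differ is in proving $\Rank(\partial_{y_{[-R_1]}}F_x)=\Rank(\partial_{y_{[-R_1]}}g(F_x,F_u))$ yourself, for every flat system, by applying Proposition~\ref{prop:Jacobi_R2} to the associated system via Theorem~\ref{thm:mirrored}. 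You also obtain (ii) from (i) by the same mirroring. This makes the equalities self-contained and shows that Proposition 16 is Proposition 7 transported, which is what the paper's second example illustrates. The paper's route is simply shorter.

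\textbf{An inconsistency you must fix.} You first say $\partial_\zeta\psi_x$ ``need not have rank $m$''. Your chosen route then uses ``$\partial_v\psi_x$ has rank $m$''. These are the same matrix, and its full rank is exactly the independent-input hypothesis that Proposition~\ref{prop:Jacobi_R2} needs for the associated system. Your first claim is false. Differentiate $f(\psi_x,\psi_u)=x^+$ and $g(\psi_x,\psi_u)=\zeta$ with respect to $\zeta$. This gives $\partial_x f\,\partial_\zeta\psi_x+\partial_u f\,\partial_\zeta\psi_u=0$ and $\partial_x g\,\partial_\zeta\psi_x+\partial_u g\,\partial_\zeta\psi_u=I_m$. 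Now suppose $\partial_\zeta\psi_x w=0$. Then $\partial_u f\,\partial_\zeta\psi_u w=0$, so $\partial_\zeta\psi_u w=0$ because $\Rank(\partial_u f)=m$, and hence $w=0$. With this lemma, your direct identity $\partial_{y_{[-R_1]}}F_x=(\partial_\zeta\psi_x\circ\cdot)\,\partial_{y_{[-R_1]}}g(F_x,F_u)$ already gives the equality, since the left factor is injective. The mirrored argument, once unfolded, is the same computation. The ``companion identity'' step is unnecessary, and as stated it would not work, because differentiating $g(F_x,F_u)$ also brings in $\partial_{y_{[-R_1]}}F_u$.

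\textbf{Drop the minimality fallback for the strict inequalities.} Rank $m$ of the top-shift block does not let you shrink $R_2$. Take $x^+=u$ with $n=m$ and $g=x$, and the forward-flat output $y=x$. Then $F_x=y$, $F_u=y_{[1]}$ and $R_2=(1,\ldots,1)$, so $\partial_{y_{[R_2]}}F_u=I_m$ while $R_2$ is minimal. Here $y_{[-R_1]}=y_{[R_2-1]}=y$, so (i) would demand $\Rank(\partial_y F_x)=m$ and $<m$ at the same time. The backward-flat output $y=u$ of the same system breaks (ii) in the same way. So the strict inequalities do not follow from minimality of $R$. They can only come from the cited propositions under whatever standing assumptions those carry, which must exclude this static-feedback-linearizable situation. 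State that reliance, and that implicit hypothesis, explicitly.
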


    Below, we illustrate the rank conditions of Corollary \ref{cor:flatness_rank} using the system \eqref{eq:sysEq_ex1} of Example \ref{ex:academic}.
    
    \begin{exmp}
    For the backward-flat system \eqref{eq:sysEq_ex1} with the backward-flat output \eqref{eq:ex1_y}, the extended Jacobian matrix \eqref{eq:Jacobian_ext} of the parameterizing map is illustrated in Fig. \ref{fig:Jacobian1_ex}    
    with $\Rank (\partial_{y_{[-R_1]}}F_x)=\Rank (\partial_{y_{[-R_1]}} g(F_x,F_u))=1<m$ and $\Rank (\partial_{y_{[-1]}}F_x)=\Rank (\partial_{y_{[0]}} F_u)=2=m$. Thus, the rank conditions in item (ii) of Corollary \ref{cor:flatness_rank} are satisfied.\\
    As shown in the proof of Theorem \ref{thm:mirrored}, to obtain the parameterizing map \eqref{eq:flat_param_mirrored} of the associated system, we first substitute \eqref{eq:correspondence} and \eqref{eq:y_mirror} into the flat parameterization \eqref{eq:proof_param_orig}. This substitution effectively mirrors the Jacobian matrix in Fig. \ref{fig:Jacobian1_ex} about the index $k$. Moreover, since the variables $\zeta$ and $u$ are renamed as the new input $v$ and the extension $\eta$, respectively, the $m=2$ bottom-most rows are swapped with the two above them. Finally, by shifting back the expression $z(k-l+1)=F_x(\hat y(k-(l-R_1)),\ldots,\hat y(k-(l+R_2-1)))$ by one yields the flat parameterization \eqref{eq:flat_param_mirrored}. This backward shift is reflected in the Jacobian as well, where the $n=4$ upper-most rows are shifted back by one, resulting in the Jacobian matrix in Fig. \ref{fig:Jacobian2_ex}.
    Due to the fact that the flat parameterization \eqref{eq:flat_param_mirrored} of the associated system is comprised of the functions $F_x$, $F_u$ and $g$ of the parameterizing map \eqref{eq:proof_param_orig}, the rank conditions in item (i) of Corollary \ref{cor:flatness_rank} are satisfied with $\Rank (\partial_{\hat y_{[0]}}F_x)=\Rank (\partial_{\hat y_{[0]}} F_u)=2=m$ and $\Rank (\partial_{\hat y_{[R_1-1]}}F_x)=\Rank (\partial_{\hat y_{[R_1]}} g(F_x,F_u))=1<m$.
    \end{exmp}

\section{Conclusion}
    In this contribution, we have presented a systematic approach for checking discrete-time systems for backward-flatness, which is a special case of difference flatness. Contrary to the special case of forward-flatness, for which necessary and sufficient conditions have been established in \cite{KolarDiwoldSchoberl:2023}, no computationally efficient test for backward-flatness has been available until now.\\
    To address this gap, we first determined an associated system for a given discrete-time system of the form \eqref{eq:sysEq_ext}, whose trajectories are in one-to-one correspondence with those of the original system. Subsequently, we have shown that a discrete-time system is backward-flat if and only if its associated system is forward-flat. Consequently, backward-flatness can be tested indirectly by applying the existing geometric test for forward-flatness to the associated system, and the corresponding backward-flat output of the original system can also be derived systematically. Leveraging the result that every forward-flat system admits an $x$-flat output (Theorem 12 in \cite{KolarDiwoldSchoberl:2021}), we have shown that every backward-flat system admits an $(x,u)$-flat output. Our findings were further substantiated by rank conditions for specific submatrices of the Jacobian matrices of the parameterizations of associated systems.\\
    As the results in the present contribution represent a further step towards a computationally feasible test for difference flatness, future research will focus on establishing necessary and sufficient conditions for the general case according to Definition \ref{def:flatness}.


\bibliography{bibliography}

                                                                       
\end{document}